\documentclass[10pt, letterpaper]{amsart}
\usepackage{amsmath,amsfonts,amssymb, tikz, amsthm}
\usepackage[colorlinks=true,linkcolor=black,anchorcolor=black,citecolor=black,filecolor=black,menucolor=black,runcolor=black,urlcolor=blue]{hyperref}
\usepackage[utf8]{inputenc}
\usepackage[english]{babel}
\usepackage{xcolor}
\usepackage[tableposition=top]{caption} 
\usepackage{epigraph}
\usepackage{mathtools}
\usepackage{musicography}

\newtheorem*{theorem*}{Theorem}
\newtheorem*{proposition*}{Proposition}
\newtheorem{theorem}{Theorem}
\newtheorem{lemma}{Lemma}
\newtheorem{proposition}{Proposition}

\newtheorem{question}{Question}
\newtheorem{conjecture}{Conjecture}

\theoremstyle{remark}
\newtheorem*{remark*}{Remark}

\newtheorem{remark}{Remark}

\theoremstyle{proof}

\numberwithin{equation}{section}

\Alph{assumption}

\newcommand{\Z}{\mathbb{Z}}

\newcommand{\C}{\mathbb{C}}

\newcommand{\N}{\mathbb{N}}

\begin{document}
	
	\title{On a conjecture of Corr\'adi and K\'atai}
	
	\author{Krishnarjun Krishnamoorthy}
	\email[Krishnarjun Krishnamoorthy]{krishnarjunmaths@outlook.com, krishnarjun@nitt.edu}
	\address{Department of Mathematics, National Institute of Technology Tiruchirappalli, Trichy, Tamil Nadu, 620015, India}
	\email[Krishnarjun Krishnamoorthy]{krishnarjunmaths@outlook.com}
	
	\keywords{Goldbach type sums, Liouville function}
	\subjclass[2020] {11N37, 11P32}
	
	\maketitle

	\begin{abstract}
		We consider Goldbach type sums corresponding to the Liouville function and prove the existence of sufficient cancellations. We also consider applications to sign patterns in the Liouville function.
	\end{abstract}
	
	\section{Introduction}\label{Section "Introduction"}
	
	Many interesting number theoretic information are deduced from the study of various averages of arithmetic functions. For instance, the prime number theorem is well known to be equivalent to the asymptotic estimate,
	\[
	\sum_{n\leqslant X} \Lambda(n) \sim  X,
	\]
	where $\Lambda(n)$ is the von Mangoldt function given by
	\[
	\Lambda(n) = \begin{cases}
		\log (p) &\mbox{if } n=p^r\mbox{ for some prime }p\\
		0 & \mbox{otherwise}.
	\end{cases}
	\]
	 A more challenging type of sums involving the von Mangoldt function is the following. Suppose that $N$ is a given integer, then we may consider the sum
	\begin{equation}\label{Equation "Goldbach von Magoldt"}
		\sum_{n=1}^{N-1} \Lambda(n) \Lambda(N-n) = \sum_{a+b=N} \Lambda(a)\Lambda(b).
	\end{equation}
	
	Finding strong lower bounds for the sum given in \eqref{Equation "Goldbach von Magoldt"} as $N\to\infty$ will lead to progress towards the \textit{strong Goldbach's conjecture} which states that every even number is the sum of two prime numbers. The sum of $a,b$ above runs over positive integers. We shall have no instance to consider non-positive integers throughout this paper.
	
	The purpose of this note is to study the analogue of \eqref{Equation "Goldbach von Magoldt"} and its higher rank analogues with the von Mangoldt function replaced by the Liouville function. Let $\lambda(n)$ denote the Liouville function; defined by the relations $\lambda(mn) = \lambda(m)\lambda(n)$ and $\lambda(p)=-1$ for any two integers $m,n$ and any prime $p$. Let $k,N$ be given positive integers. We define the set 
	\begin{equation}\label{Equation "S_{k} definition"}
		\mathcal{S}_{k}(N) := \left\{(a_1,\ldots,a_k)\in \N^{k}\ |\ a_1+\ldots+a_k = N\right\}.
	\end{equation}
	Since $|\mathcal{S}_{2}(N)| = N-1$ and
	\[
	\left|\mathcal{S}_{k}(N)\right| = \sum_{a=1}^{N-1} \left|\mathcal{S}_{k-1}(N-a)\right|,
	\]
	we may calculate, for example by induction on $k$, that
	\[
	\left|\mathcal{S}_{k}(N)\right| = \binom{N-1}{k-1}
	\]
	as $N\to\infty$ and for a fixed $k$. Throughout the paper we adopt the convention that $\binom{k}{\ell} = 0$ if $k < \ell$. Define the Goldbach-type sum
	\begin{equation}\label{Equation "G_{k} definition"}
		\mathcal{G}_{k}(N) := \underset{(a_1,\ldots,a_{k})\in \mathcal{S}_{k}(N)}{\sum \ldots \sum} \lambda(a_1)\ldots\lambda(a_k).
	\end{equation}
	In this paper we shall study $\mathcal{G}_{k}(N)$ for $k\geqslant 2$ as $N\to\infty$. We wish to improve upon the trivial bound 
	\begin{equation}\label{Equation "Trivial bound"}
		\left|\mathcal{G}_{k}(N)\right|\leqslant \left|\mathcal{S}_{k}(N)\right|\sim \frac{N^{k-1}}{(k-1)!}.
	\end{equation}

	\subsection{Main results}\label{Subsection "Main results"}
	
	First, we state our results for $k\geqslant 3$.
	
	\begin{theorem}\label{Theorem "G_{k}, k>=3"}
		For any positive constant $A$ and for any fixed integer $k\geqslant 3$, as $N\to\infty$ we have 
		\[
		\left|\mathcal{G}_{k}(N)\right| \ll_{A,k} \frac{N^{k-1}}{\log^A(N)}.
		\]
	\end{theorem}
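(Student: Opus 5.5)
The plan is to use the circle method, in its simplest form, together with a uniform exponential-sum bound for the Liouville function. Put $S(\alpha) := \sum_{n\leqslant N} \lambda(n) e(n\alpha)$. By orthogonality,
\[
\mathcal{G}_{k}(N) = \int_0^1 S(\alpha)^k e(-N\alpha)\, d\alpha ,
\]
since the terms with some $a_i>N$ cannot contribute to a sum equal to $N$. The key input is the classical Davenport estimate: for every $B>0$,
\[
\sup_{\alpha\in[0,1]} |S(\alpha)| \ll_B \frac{N}{\log^B N}.
\]
This is proved exactly as for the M\"obius function. On minor arcs one uses Vaughan's identity, or a Vinogradov-type bilinear decomposition, which applies to $\lambda$ because $\lambda = \mu * \mathbf{1}_{\square}$ is completely multiplicative. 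On major arcs $\alpha = a/q + \beta$ with $q \leqslant \log^{C} N$, one expands into Dirichlet characters modulo $q$ and uses the Siegel--Walfisz type bound $\sum_{n\leqslant x}\lambda(n)\chi(n) \ll x\exp(-c\sqrt{\log x})$, followed by partial summation in $\beta$. I would cite this rather than reprove it. The Siegel-zero ineffectivity is harmless here, because the implied constant is allowed to depend on $A$.

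With this in hand, I would bound the integral by taking out $k-2$ factors of $S$ in sup norm and applying Parseval to the remaining two:
\[
|\mathcal{G}_{k}(N)| \leqslant \Big(\sup_{\alpha}|S(\alpha)|\Big)^{k-2} \int_0^1 |S(\alpha)|^2\, d\alpha = \Big(\sup_{\alpha}|S(\alpha)|\Big)^{k-2} \lfloor N\rfloor .
\]
Substituting the Davenport bound with $B = A/(k-2)$ gives
\[
|\mathcal{G}_k(N)| \ll_{A,k} N\cdot \frac{N^{k-2}}{\log^{A}N} = \frac{N^{k-1}}{\log^{A}N}.
\]
This argument uses $k\geqslant 3$ in an essential way. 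At least one sup-norm factor is needed to gain over the trivial bound, and this is exactly why the case $k=2$ has to be treated separately. There, Parseval alone only gives $O(N)$, which is the trivial bound.

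The main obstacle is entirely the uniform bound on $S(\alpha)$, and in particular its major-arc part. The point to check there is that the character sums for $\lambda$ decay at the Siegel--Walfisz rate uniformly for $q\leqslant \log^C N$. I would obtain this from the zero-free region for $L(s,\chi)$ together with the identity
\[
\sum_n \frac{\lambda(n)\chi(n)}{n^{s}} = \frac{L(2s,\chi^2)}{L(s,\chi)},
\]
using Perron's formula and Siegel's theorem. I expect this to go through as for the M\"obius function. Everything else in the proof is the routine Parseval argument above.
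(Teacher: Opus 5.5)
Your proof is correct, and it takes a more direct route than the paper's, though the analytic input is the same: Davenport's uniform bound together with Parseval.

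The paper does not bound $\int_0^1 S^k e(-N\alpha)\,d\alpha$ directly. It proceeds in three steps.
\begin{enumerate}
\item It uses $\int_0^1|S|^4\leqslant \sup|S|^2\int_0^1|S|^2$ to get the mean-square estimate $\sum_{n\leqslant N}|\mathcal{G}_2(n)|^2\ll_A N^3\log^{-A}N$ (Proposition~\ref{Proposition "Two norm average"}).
\item It applies Cauchy--Schwarz to $\mathcal{G}_3(N)=\sum_{n<N}\lambda(n)\mathcal{G}_2(N-n)$.
\item It inducts on $k$ through $\mathcal{G}_k(N)=\sum_a\lambda(a)\mathcal{G}_{k-1}(N-a)$, bounding this convolution trivially.
\end{enumerate}

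At $k=3$ the two arguments are the same inequality, unpacked differently. The paper's Cauchy--Schwarz bound is essentially $\|S\|_2\|S\|_4^2$, and the fourth-moment step turns this into your $\sup|S|\cdot\|S\|_2^2=N\sup|S|$.

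For larger $k$ you simply pull out more sup-norm factors. This treats all $k\geqslant 3$ at once and needs no induction or partial summation. It also sidesteps the paper's identification of $\int_0^1|S|^4$ with $\sum_{n\leqslant N}|\mathcal{G}_2(n)|^2$. That identification is really only an inequality, since $S^2$ has frequencies up to $2N$, though it goes in the harmless direction.

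What the paper's detour buys is the mean-square bound for $\mathcal{G}_2$ itself. The paper needs that bound anyway for Theorem~\ref{Theorem "K(delta) bound"} and in the proof of Theorem~\ref{Theorem "Sign pattern"}. For the present theorem alone, your argument is the cleaner one.
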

	Thus, we have obtained an improvement over the trivial bound \eqref{Equation "Trivial bound"} as long as $k\geqslant 3$ is fixed and $N\to\infty$. This leaves the case when $k=2$. This case is much trickier to handle and we obtain only a partial result.  For every $\delta > 0$, let us define
	\begin{equation}\label{Equation "K(epsilon) definition"}
		\mathcal{K}(\delta) := \left\{N \in \N\ |\ |\mathcal{G}_{2}(N)| > \delta N\right\}.
	\end{equation}
	Our next theorem addresses the case when $k=2$.
	\begin{theorem}\label{Theorem "K(delta) bound"}
		For every $\delta > 0$,
		\[
		\left|\mathcal{K}(\delta)\cap \{1,2,\ldots,N\}\right| \ll_{A} \frac{1}{\delta^2} \frac{N}{\log^A(N)}
		\]
		for any positive constant $A$. In particular,
		\[
		\liminf_{N\to\infty} \frac{\left|\mathcal{G}_{2}(N)\right|}{N} = 0.
		\]
	\end{theorem}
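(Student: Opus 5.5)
We will prove Theorem \ref{Theorem "K(delta) bound"} by a second-moment argument: bound the mean square of $\mathcal{G}_2(N)$ over $N\leqslant X$ via Parseval, then apply Chebyshev's inequality. Let
\[
S(\alpha) := \sum_{n\leqslant X} \lambda(n) e(n\alpha), \qquad e(x)=e^{2\pi i x}.
\]
For $N\leqslant X$, every pair $a+b=N$ has $a,b\leqslant X$, so $\mathcal{G}_2(N)$ is the $N$-th Fourier coefficient of $S(\alpha)^2$:
\[
\mathcal{G}_2(N) = \int_0^1 S(\alpha)^2 e(-N\alpha)\, d\alpha .
\]
Bessel's inequality, applied to the coefficients of $S^2$ (which include all $N\leqslant X$), then gives
\[
\sum_{N\leqslant X} |\mathcal{G}_2(N)|^2 \leqslant \int_0^1 |S(\alpha)|^4 \, d\alpha \leqslant \sup_{\alpha}|S(\alpha)|^2 \int_0^1 |S(\alpha)|^2\, d\alpha = \lfloor X\rfloor \sup_\alpha |S(\alpha)|^2 .
\]

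The key input is Davenport's estimate, uniform in $\alpha\in\mathbb{R}$:
\[
\sup_{\alpha}\Big|\sum_{n\leqslant X}\mu(n)e(n\alpha)\Big| \ll_A \frac{X}{\log^A X}.
\]
The same bound holds for $\lambda$. One can prove it by the same method: Vinogradov/Vaughan on the minor arcs, and Siegel--Walfisz for $\lambda$ in progressions on the major arcs. Alternatively, one can deduce it directly from the identity $\lambda(n)=\sum_{d^2\mid n}\mu(n/d^2)$. In that case the sum over $d\leqslant \log^{B} X$ is handled by applying Davenport to $\sum_{m\leqslant X/d^2}\mu(m)e(md^2\alpha)$. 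The tail $d>\log^B X$ contributes trivially at most $\sum_{d>\log^B X} X/d^2 \ll X/\log^B X$. Choosing $B$ large relative to $A$ gives $\sup_\alpha|S(\alpha)|\ll_A X/\log^A X$. This step is the only nontrivial one, and it is where the genuine analytic difficulty lies; I would quote Davenport's theorem and use the $d^2$ decomposition to transfer it to $\lambda$.

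Combining these, we obtain $\sum_{N\leqslant X}|\mathcal{G}_2(N)|^2 \ll_A X^3/\log^{2A}X$. Now split $\mathcal{K}(\delta)\cap\{1,\dots,X\}$ dyadically. Any $N\in\mathcal{K}(\delta)$ with $X/2^{j+1}<N\leqslant X/2^j$ satisfies $|\mathcal{G}_2(N)|^2>\delta^2 X^2/4^{j+1}$. Hence, by Chebyshev's inequality, the number of such $N$ is
\[
\ll_A \frac{4^{j}}{\delta^2 X^2}\cdot \frac{(X/2^j)^3}{\log^{2A}(X/2^j)} = \frac{X}{2^j\delta^2\log^{2A}(X/2^j)} .
\]
For the dyadic ranges with $X/2^j\geqslant \sqrt{X}$ the logarithm is $\gg \log X$, and summing over $j$ gives $\ll_A X/(\delta^2\log^{2A}X)$. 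The remaining range $N\leqslant \sqrt X$ contributes at most $\sqrt X$, which is trivially within the claimed bound once the implied constant is allowed to absorb it (for fixed $\delta$ with $\delta\leqslant 1$; for $\delta>1$ the set is empty since $|\mathcal{G}_2(N)|\leqslant N$). Renaming $2A$ as $A$ gives the stated estimate.

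Finally, for the corollary, fix $\delta>0$. The set $\mathcal{K}(\delta)$ has density zero, so its complement is infinite. Hence there are infinitely many $N$ with $|\mathcal{G}_2(N)|\leqslant\delta N$, so $\liminf |\mathcal{G}_2(N)|/N\leqslant\delta$. Letting $\delta\to0$ gives $\liminf_{N\to\infty} |\mathcal{G}_2(N)|/N = 0$.
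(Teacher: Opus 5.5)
Your proof is correct and follows essentially the paper's route: Davenport's uniform bound combined with Parseval/Bessel gives $\sum_{N\leqslant X}|\mathcal{G}_2(N)|^2\ll_A X^3/\log^{A}X$, and Chebyshev's inequality then bounds $|\mathcal{K}(\delta)\cap\{1,\ldots,X\}|$. The differences are cosmetic. You handle the varying scale of $N$ by dyadic splitting, where the paper uses partial summation to bound $\sum_{n\leqslant N}|\mathcal{G}_2(n)/n|^2$. You use Bessel's inequality, which is the right statement since the coefficients of $S^2$ above $X$ are not $\mathcal{G}_2$, where the paper writes an equality. And you derive the $\lambda$-version of Davenport from the $\mu$-version rather than quoting it.
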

	It is conjectured that $\left|\mathcal{G}_{2}(N)\right|=o(N)$ (see Conjecture \ref{Conjecture "Corradi-Katai"} below). We also present an interesting corollary of Theorem \ref{Theorem "G_{k}, k>=3"} regarding the equi-distribution of signs of the Liouville function. In fact, it can be gleaned from the proof that Theorem \ref{Theorem "Sign pattern"} and Theorem \ref{Theorem "G_{k}, k>=3"} are equivalent.

	\begin{theorem}\label{Theorem "Sign pattern"}
		Let $k\geqslant 3$ be any integer, and let $(\varepsilon_1,\ldots,\varepsilon_{k})\in \{\pm 1\}^{k}$ denote an arbitrary sequence of ``signs''. Then
		\[
		\left|\left\{ (a_1,\ldots,a_{k})\in \mathcal{S}_{k}(N)\ |\  (\lambda(a_1),\ldots,\lambda(a_{k})) = (\varepsilon_1,\ldots,\varepsilon_{k})\right\}\right| = \frac{1}{2^{k}} \binom{N^{k-1}}{k-1} + \mathcal{O}_{A} \left(\frac{N^{k-1}}{\log^{A}(N)}\right)
		\]
		for a fixed $k$ and as $N\to\infty$.
	\end{theorem}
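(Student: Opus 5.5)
The plan is the standard indicator expansion, with Theorem \ref{Theorem "G_{k}, k>=3"} supplying the cancellation. (The main term in the statement should be read as $\frac{1}{2^k}\binom{N-1}{k-1}$; the difference from $N^{k-1}/(2^k(k-1)!)$ is only $O(N^{k-2})$, so either form will do.) For $a\in\N$ and $\varepsilon\in\{\pm1\}$ we have $\frac{1+\varepsilon\lambda(a)}{2} = \mathbf{1}[\lambda(a)=\varepsilon]$. Hence the count in question equals
\[
\frac{1}{2^k}\sum_{(a_1,\ldots,a_k)\in\mathcal{S}_k(N)} \prod_{i=1}^{k}\bigl(1+\varepsilon_i\lambda(a_i)\bigr)
= \frac{1}{2^k}\sum_{I\subseteq\{1,\ldots,k\}} \Bigl(\prod_{i\in I}\varepsilon_i\Bigr) \sum_{\mathcal{S}_k(N)} \prod_{i\in I}\lambda(a_i).
\]
The term $I=\emptyset$ gives exactly $\frac{1}{2^k}|\mathcal{S}_k(N)| = \frac{1}{2^k}\binom{N-1}{k-1}$, which is the main term. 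It remains to show that every nonempty $I$ contributes $O_{A,k}(N^{k-1}\log^{-A}N)$.

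Fix a nonempty $I$ with $|I|=j$. By symmetry we may assume $I=\{1,\ldots,j\}$. Grouping by $m=a_1+\cdots+a_j$, the inner sum becomes
\[
\sum_{m=j}^{N-(k-j)} \mathcal{G}_j(m)\,\bigl|\mathcal{S}_{k-j}(N-m)\bigr|,
\]
where $|\mathcal{S}_0(n)|$ is interpreted as $\mathbf{1}[n=0]$ when $j=k$. We then split into cases by $j$.

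\emph{Case $j\geqslant 3$.} Theorem \ref{Theorem "G_{k}, k>=3"} gives $|\mathcal{G}_j(m)|\ll m^{j-1}\log^{-A}m$. For $m\leqslant \sqrt N$ we use the trivial bound instead. Summing against $|\mathcal{S}_{k-j}(N-m)|\ll N^{k-j-1}$ over $m\leqslant N$ gives $O(N^{k-1}\log^{-A}N)$, with the small-$m$ range contributing only $O(N^{k-1-1/2})$ or so.

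\emph{Case $j=1$.} Here $\mathcal{G}_1(m)=\lambda(m)$, and the sum is $\sum_{m}\lambda(m)\binom{N-m-1}{k-2}$. By partial summation, using $\sum_{m\leqslant x}\lambda(m)\ll x\log^{-A}x$ (the prime number theorem with de la Vall\'ee Poussin error term), this is $\ll N^{k-1}\log^{-A}N$.

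\emph{Case $j=2$.} This is the main obstacle, since Theorem \ref{Theorem "K(delta) bound"} controls $\mathcal{G}_2$ only on average. The plan is to avoid $\mathcal{G}_2$ entirely and write
\[
\sum_{m} \mathcal{G}_2(m)\,w(N-m) = \sum_{a_1} \lambda(a_1)\sum_{a_2}\lambda(a_2)\,w(N-a_1-a_2),
\]
with $w(n)=\binom{n-1}{k-3}$, a polynomial weight (note $k\geqslant 3$, so this is legitimate). For fixed $a_1$, the inner sum over $a_2$ is again a smooth polynomial-weighted sum of $\lambda$ over an interval. By partial summation and the prime number theorem it is $\ll N^{k-2}\log^{-A}N$ uniformly in $a_1$, apart from the ranges with $N-a_1$ small, which contribute negligibly after trivial bounding. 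Summing over $a_1\leqslant N$ gives $O(N^{k-1}\log^{-A}N)$. Equivalently, one can run the $j=1$ argument with one variable fixed.

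Collecting the $2^k-1$ nonempty subsets, each bounded by $O_{A,k}(N^{k-1}\log^{-A}N)$, proves the theorem. The same expansion also shows the equivalence with Theorem \ref{Theorem "G_{k}, k>=3"} remarked on in the paper: taking the signed combination of the counts over all sign patterns recovers $\mathcal{G}_k(N)$.
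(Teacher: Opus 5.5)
Your proof is correct and shares the paper's skeleton. Both arguments use the identity $\mathbf{1}[\lambda(a)=\varepsilon]=\frac{1}{2}(1+\varepsilon\lambda(a))$, and the constant term gives $2^{-k}\binom{N-1}{k-1}$; you are right to read this as the intended main term. Both bound $\sum_n\lambda(n)\binom{N-n-1}{k-2}$ by partial summation and the prime number theorem, and both invoke Theorem~\ref{Theorem "G_{k}, k>=3"} for the full product $\mathcal{G}_k(N)$. Where you differ is in organization and in one input. The paper does not expand over all subsets $I$. It peels off $a_1$ and inducts on $\mathcal{T}_{\underline{\varepsilon}}(N)=2^{-k}\binom{N-1}{k-1}+2^{-k}\bigl(\prod_i\varepsilon_i\bigr)\mathcal{G}_k(N)+O_A(N^{k-1}\log^{-A}N)$, so the mixed terms are absorbed into the inductive error. (Its display drops the factor $2^{-k}$ on $\mathcal{G}_k(N)$; your expansion has it right.) In the paper the $\mathcal{G}_2$-contribution arises only at $k=3$, as $\sum_n\mathcal{G}_2(N-n)$. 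It is bounded there by Cauchy--Schwarz and the mean-square estimate \eqref{Equation "2 norm bound"}, so ultimately by Davenport's theorem. You instead use the free coordinate available because $k\geqslant 3$. This turns the $a_2$-sum into a monotone-weighted one-variable sum of $\lambda$, which the prime number theorem alone controls; the step is valid, including your trivial handling of small $N-a_1$. The same trick works for every $I$ with $|I|<k$. So your route isolates $\mathcal{G}_k(N)$ as the only term needing Davenport-strength input, which makes the claimed equivalence with Theorem~\ref{Theorem "G_{k}, k>=3"} clean; the paper's induction is simply shorter. One cosmetic point: for $j=k$ the bound $|\mathcal{S}_{k-j}(N-m)|\ll N^{k-j-1}$ makes no sense. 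With your convention $|\mathcal{S}_0(n)|=\mathbf{1}[n=0]$, however, that term is exactly $\mathcal{G}_k(N)$, which Theorem~\ref{Theorem "G_{k}, k>=3"} bounds directly.
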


	\subsection{Previous works}\label{Subsection "Previous works"}

	In 1969, Corr\'adi and K\'atai made the following conjecture \cite{CK}.
	\begin{conjecture}[Corr\'adi-K\'atai]\label{Conjecture "Corradi-Katai"}
		With notation as above,
	\begin{equation}\label{Equation "Conjecture"}
			\lim_{N\to\infty} \frac{\left|\mathcal{G}_2(N)\right|}{N} = 0.
	\end{equation}
	\end{conjecture}
	Thus Theorem \ref{Theorem "K(delta) bound"} asserts that Conjecture \ref{Conjecture "Corradi-Katai"} is true, should the limit defining \eqref{Equation "Conjecture"} exist. Conjecture \ref{Conjecture "Corradi-Katai"} was proven conditionally (on the existence of infinitely many Siegel zeros) in \cite{CKProof} but has resisted an unconditional proof so far. In fact the authors of \cite{CKProof} prove a superior power saving error term. Recently, Sarnak asked the following substantially weaker question.
	\begin{question}[Sarnak]\label{Question "Sarnak"}
		Is it true that $|\mathcal{G}_{2}(N)| < N-1$ for all $N \gg 1$?
	\end{question}
	Question \ref{Question "Sarnak"} asks for \textit{any} improvement upon the trivial bound \eqref{Equation "Trivial bound"} and was only recently answered in the positive by Mangerel \cite{MangerelIMRN}.
	\begin{theorem}[Mangerel]\label{Theorem "Mangerel 1"}
		If $N\notin\{2,3,5,10\}$, then $|\mathcal{G}_{2}(N)| < N-1$.
	\end{theorem}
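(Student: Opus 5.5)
The plan is to argue by contradiction. Suppose $|\mathcal{G}_{2}(N)| = N-1$. Since every term $\lambda(a)\lambda(N-a)$ equals $\pm1$, this forces a constant $\varepsilon\in\{\pm1\}$ with
\[
\lambda(a)\lambda(N-a)=\varepsilon\quad\text{for all } 1\leqslant a\leqslant N-1.
\]
Taking $a=1$ gives $\varepsilon=\lambda(N-1)$. If $N$ is even, taking $a=N/2$ gives $\varepsilon=1$. The aim is to show that this rigid functional equation is incompatible with complete multiplicativity for every $N$ outside a small finite set. That set is then handled by direct computation, which should leave exactly $\{2,3,5,10\}$.

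First I would extract the elementary consequences of multiplicativity.

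\emph{Descent to divisors.} If $d\mid N$, substituting $a=dm$ and using $\lambda(dm)\lambda(d(N/d-m))=\lambda(m)\lambda(N/d-m)$ shows that $N/d$ satisfies the same equation with the same $\varepsilon$. So every divisor $N'$ of $N$ is also bad with the same sign. If $N$ has a divisor $N'$ that is good, with $N'>1$ and $N'\notin\{2,3,5,10\}$, we are done at once.

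\emph{Square shifts.} Since $\lambda(m^2t)=\lambda(t)$, taking $a=m^2$ gives $\lambda(N-m^2)=\varepsilon$ for every $m<\sqrt N$. Taking $a=2m^2$ gives $\lambda(N-2m^2)=-\varepsilon$. More generally, $\lambda(N-r m^2)=\varepsilon\lambda(r)$ for every fixed $r$.

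\emph{Doubling dynamics.} Comparing $a$ with $2a$ gives $\lambda(2b-N)=-\lambda(b)$ for $N/2<b<N$. This propagates sign information from the top of $[1,N-1]$ down to the bottom.

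The contradiction should come from finding two representations of $N$ that force opposite values of $\varepsilon$. If $N=x^2+y^2$ with $x,y\geqslant1$, taking $a=x^2$ gives $\varepsilon=1$. If $N=x^2+2y^2$, taking $a=x^2$ gives $\varepsilon=\lambda(2)=-1$. More generally, $N=x^2+ry^2$ forces $\varepsilon=\lambda(r)$. So I would show that every large $N$, after using the descent step to reduce to a convenient divisor, is represented by two binary forms $x^2+ry^2$ and $x^2+sy^2$ with $\lambda(r)\neq\lambda(s)$ and $x,y\geqslant1$. Small $r,s$ such as $1,2,3,6$ should suffice, with the local conditions controlled by congruences of $N$ modulo $8$ and $3$. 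When one form fails for local reasons, I would replace it by a representation $N=a+b$ with $a=x^2$ and $b=ry^2$, or $b=2ry^2$, for some $r$ chosen so that local solvability holds.

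The main obstacle is exactly this step: turning the local solvability of the needed quadratic representations into actual representations with $x,y\geqslant1$, uniformly in $N$. Equivalently, one must show that $\lambda(N-m^2)$ is not constant in $m$. My first attempt would avoid analytic input. I would combine the square-shift identities with the doubling relation $\lambda(2b-N)=-\lambda(b)$, iterated from $b=N-1$ and $b=N-m^2$, to produce two explicit small integers $a,a'$ whose values $\lambda(N-a)$ and $\lambda(N-a')$ are forced into contradiction. That would leave a finite list of residue classes and small $N$ to check by hand or by computer.
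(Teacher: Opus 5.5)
\textbf{There is a genuine gap: the step that is supposed to produce the contradiction for prime $N$ is missing, and the mechanisms you propose for it cannot work.}

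Your preliminary identities are all correct. Your divisor descent is essentially the reduction-to-primes step the paper attributes to Mangerel: since $4,6,9,15,25$ are good, a bad $N\notin\{1,2,3,5,10\}$ would have a bad prime divisor $p\geqslant 7$. But for prime $N$ the descent is vacuous, so everything rests on the quadratic-form step. That step is not only unproved; it cannot work as proposed.

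Whether $N$ is represented by $x^2+ry^2$ is not governed by $N$ modulo $8$ and $3$. It depends on every prime factor of $N$, and each binary form represents only a density-zero set of integers. Concretely, a prime $p\equiv 23 \pmod{24}$ satisfies $(\frac{-1}{p})=(\frac{-2}{p})=(\frac{-3}{p})=(\frac{-6}{p})=-1$. So it is represented by none of $x^2+y^2$, $x^2+2y^2$, $x^2+3y^2$, $x^2+6y^2$, $2x^2+3y^2$, nor by your $2r$ variants.

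More generally, take any finite set $D$ of positive integers. Dirichlet's theorem gives infinitely many primes $p$ with $(\frac{-1}{p})=-1$ and $(\frac{q}{p})=1$ for every prime $q$ dividing an element of $D$. Then $(\frac{-d}{p})=-1$ for all $d\in D$, so such a $p$ (larger than the elements of $D$) is represented by no form $rx^2+sy^2$ with $rs\in D$. Hence no finite family of forms, and no finite list of residue classes, can finish the proof. Letting $r$ depend on $N$ merely restates the problem.

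Your fallback does not close the gap either. The doubling relation is a formal consequence of the hypothesis: replace $a$ by $2a$ in $\lambda(N-a)=\varepsilon\lambda(a)$. Iterating it from $N-1$ or $N-m^2$ therefore only produces values such as $\lambda(N-2^{j}m^{2})=(-1)^{j}\varepsilon$, all consistent with the hypothesis. A contradiction requires knowing some $\lambda(N-a)$ independently, i.e.\ knowing the factorization of $N-a$, and for prime $N$ this is not determined by congruence conditions.

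Nor is non-constancy of $\lambda(N-m^2)$ equivalent to the theorem; it is strictly stronger. It fails for $N=21$, where $\lambda(20)=\lambda(17)=\lambda(12)=\lambda(5)=-1$ although $21$ is good. For primes it is a Chowla-type assertion for which you offer no proof.

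What is missing is a mechanism valid for \emph{every} prime $p$. In the argument the paper cites, this comes from the global structure modulo $p$. The rigidity $\lambda(p-a)=\varepsilon\lambda(a)$, combined with complete multiplicativity, forces $\lambda$ on $[1,p-1]$ to agree almost exactly with the Legendre symbol $(\frac{\cdot}{p})$; for $p=3,5$ it agrees exactly. Properties of the quadratic Gauss sum then yield the contradiction. Your proposal has no substitute for this step.
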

	Define $\phi(N)$ so that $|\mathcal{G}_{2}(N)| = N-1 - \phi(N)$. It follows that $\phi(N) \geqslant 0$. As progress towards Conjecture \ref{Conjecture "Corradi-Katai"}, it is important to understand the true order of growth\footnote{Assuming a strong zero-free region for the Dirichlet $L$ functions, a lower bound for $\phi(N)$ is obtained in \cite{mangerel2024shustermansgoldbachtypeproblemsign}.} of $\phi(N)$.

	The proof of Theorem \ref{Theorem "Mangerel 1"} involved first a reduction argument, which reduced to the case when $N$ was a prime\footnote{A similar reduction argument was used in \cite{KKPAMS} to obtain combinatorial proofs of various results associated to the Chowla's conjecture.}. Supposing the contradiction for a prime $p$, one deduces that the Liouville function mimics the quadratic residue symbol modulo $p$, almost exactly. Leveraging the properties of Gauss sums derives a contradiction.

	\subsection*{Acknowledgement}
	
	The author wishes to thank Prof. Akhilesh Parol for inviting the author to visit the Kerala School of Mathematics. The author also wishes to thank the institute for its generous hospitality and wonderful research atmosphere.	
	
	\section{Preliminaries}
	
	\subsection{Fourier theory}
	
	Suppose $\{a_{n}\}\in \ell^{2}(\C)$ is an element of the Hilbert space of all square integrable sequences of complex numbers. Then we know that $T(x) := \sum_{n=1}^{\infty} a_{n} e(nx) \in L^2([0,1])$. Here and henceforth, we follow standard conventions and denote by $e(x)$, the quantity $e^{2\pi ix}$. A simple calculation proves the following lemma.
	
	\begin{lemma}\label{Lemma "Fourier Coefficient"}
		Let $N\in \Z$ be a given integer. The following hold true.
		\begin{enumerate}
			\item	\[
			\int\limits_{0}^{1} T^2(x) e(-Nx) dx = \sum_{n_1 + n_2 = N} a_{n_1} a_{n_2},
			\]
			\item 	\[
			\int\limits_{0}^{1} |T(x)|^2 e(-hx) dx = \sum_{n_1 - n_2 = h} a_{n_1} \overline{a_{n_2}}.
			\]
		\end{enumerate}
	\end{lemma}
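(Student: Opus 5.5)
The statement is Lemma \ref{Lemma "Fourier Coefficient"}, and both parts follow from the orthogonality relation
\[
\int\limits_{0}^{1} e(mx)\,dx = \begin{cases} 1 & m=0,\\ 0 & m\in\Z\setminus\{0\},\end{cases}
\]
applied after expanding the relevant products of Fourier series. The plan is to expand, interchange summation and integration, and then collapse the double sum using orthogonality. The only point requiring care is justifying the interchange, because the series for $T$ is only known to converge in $L^2([0,1])$, not pointwise.

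For part (1), I write
\[
T^2(x)e(-Nx) = \sum_{n_1,n_2\geqslant 1} a_{n_1}a_{n_2}\, e\big((n_1+n_2-N)x\big).
\]
To justify integrating term by term, I use the partial sums $T_M(x)=\sum_{n\leqslant M} a_n e(nx)$. These converge to $T$ in $L^2$, so $T_M^2\to T^2$ in $L^1$, since $\|T_M^2-T^2\|_1\leqslant \|T_M-T\|_2\,\|T_M+T\|_2$ by Cauchy--Schwarz. For each fixed $M$, $T_M^2 e(-Nx)$ is a finite trigonometric sum, and orthogonality gives
\[
\int_0^1 T_M^2(x)e(-Nx)\,dx=\sum_{\substack{n_1+n_2=N\\ n_1,n_2\leqslant M}} a_{n_1}a_{n_2}.
\]
Since all indices are positive, the sum is finite and stabilizes once $M\geqslant N$. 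It equals $0$ if $N\leqslant 1$, consistent with the empty sum. Passing to the limit $M\to\infty$ then yields (1).

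For part (2) I proceed the same way, with $|T_M|^2 = T_M\overline{T_M} = \sum_{n_1,n_2\leqslant M} a_{n_1}\overline{a_{n_2}}\,e((n_1-n_2)x)$. Orthogonality against $e(-hx)$ retains exactly the pairs with $n_1-n_2=h$. Again $|T_M|^2\to|T|^2$ in $L^1$.

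Here the finite sums no longer stabilize, so I need convergence of the infinite sum $\sum_{n_1-n_2=h} a_{n_1}\overline{a_{n_2}}$. It converges absolutely by Cauchy--Schwarz, being bounded by $\sum_n |a_{n+h}||a_n|\leqslant \|a\|_{\ell^2}^2$. Hence the truncated sums converge to the full sum. Combined with the $L^1$ convergence of the integrals, this gives (2).

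The main obstacle is this limiting argument. Everything else is the routine orthogonality computation. In the paper's application $a_n$ is finitely supported (namely $\lambda(n)$ for $n\leqslant N$), in which case $T$ is a trigonometric polynomial and the lemma is immediate.
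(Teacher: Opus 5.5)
Your proposal is correct and takes the same route as the paper, which says only that the lemma follows from ``a simple calculation'': expand the product of the series and apply the orthogonality relation $\int_0^1 e(mx)\,dx=0$ for nonzero integers $m$. Your truncation argument ($L^1$ convergence of $T_M^2$ and $|T_M|^2$, plus absolute convergence of $\sum_{n_1-n_2=h}a_{n_1}\overline{a_{n_2}}$) supplies the justification for swapping sum and integral that the paper leaves implicit. As you note, it is not needed in the paper's application, where $T$ is a trigonometric polynomial.
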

	
	\begin{proposition}\label{Proposition "Two norm average"}
		As $N\to\infty$, we have
		\begin{equation}\label{Equation "Two norm average"}
			\sum_{n=1}^{N} \left|\frac{\mathcal{G}_{2}(n)}{n}\right|^2 \ll_{A} \frac{N}{\log^{A}(N)}.
		\end{equation}
	\end{proposition}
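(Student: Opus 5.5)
Set $T(x) := \sum_{n\leqslant N} \lambda(n) e(nx)$. By part (1) of Lemma \ref{Lemma "Fourier Coefficient"}, for every $n\leqslant N$ we have $\mathcal{G}_2(n) = \int_0^1 T^2(x) e(-nx)\,dx$, since only $a_1,a_2<n\leqslant N$ contribute. Thus $\mathcal{G}_2(n)$ is the $n$-th Fourier coefficient of $T^2$, and Parseval gives
\[
\sum_{n\leqslant N} |\mathcal{G}_2(n)|^2 \leqslant \int_0^1 |T(x)|^4\,dx \leqslant \Big(\sup_{x\in[0,1]}|T(x)|\Big)^2 \int_0^1 |T(x)|^2\,dx = N \Big(\sup_x |T(x)|\Big)^2 .
\]

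The key input is the uniform exponential sum bound of Davenport type for the Liouville function,
\[
\sup_{x\in[0,1]} \Big|\sum_{n\leqslant y}\lambda(n)e(nx)\Big| \ll_A \frac{y}{\log^{A}(y)} .
\]
This is proved by splitting $[0,1]$ into major arcs (rationals $a/q$ with $q\leqslant \log^B y$) and minor arcs. On the major arcs one uses Siegel–Walfisz for $\lambda$ twisted by Dirichlet characters. On the minor arcs one uses a Vaughan-type identity, or Vinogradov's bilinear method; since $\lambda$ is completely multiplicative, one can write $\lambda = \mu * \mathbf{1}_{\square}$-type decompositions and treat it like $\mu$. I would quote this as a known result rather than reprove it. Inserting it with $y=N$ gives
\[
\sum_{n\leqslant N} |\mathcal{G}_2(n)|^2 \ll_A \frac{N^3}{\log^{2A}(N)} .
\]

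This bound does not yet yield \eqref{Equation "Two norm average"}, because of the weight $1/n^2$: small $n$ would be estimated only trivially. I would therefore decompose dyadically. For $M\leqslant N$, apply the above with $N$ replaced by $M$ on the range $M/2< n\leqslant M$:
\[
\sum_{M/2<n\leqslant M} \frac{|\mathcal{G}_2(n)|^2}{n^2} \ll \frac{1}{M^2}\cdot \frac{M^3}{\log^{2A}(M)} = \frac{M}{\log^{2A}(M)} .
\]
For $n\leqslant \sqrt N$, use the trivial bound $|\mathcal{G}_2(n)|\leqslant n$, which contributes at most $\sqrt N$. Summing over the dyadic $M$ in $[\sqrt N, N]$, where $\log M \asymp \log N$, gives a geometric series dominated by $M=N$. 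The total is $\ll_A N/\log^{2A}(N) + \sqrt N$, and this proves \eqref{Equation "Two norm average"} after renaming $A$.

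The main obstacle is the uniform minor/major arc bound for $\sum \lambda(n)e(nx)$. Everything else is Parseval together with the dyadic bookkeeping. If that bound may not be cited, I would derive it from Davenport's theorem for $\mu$ via $\lambda(n)=\sum_{d^2\mid n}\mu(n/d^2)$. Summing over $d$, this gives $\sum_{n\leqslant y}\lambda(n)e(nx) = \sum_{d\leqslant \sqrt y}\sum_{m\leqslant y/d^2}\mu(m)e(md^2x)$. Applying Davenport's bound (uniform in the frequency) to each inner sum bounds the total by $\sum_d (y/d^2)/\log^A(y/d^2)$. For $d\leqslant y^{1/4}$ this is $\ll y/\log^A y$, and for larger $d$ the trivial bound $\sum_{d>y^{1/4}} y/d^2\ll y^{3/4}$ suffices.
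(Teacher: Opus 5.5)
Your argument is correct and is essentially the paper's proof: you bound $\sum_{n\le N}|\mathcal{G}_2(n)|^2$ by $\int_0^1|T(x)|^4\,dx\le N\sup_x|T(x)|^2$, then invoke Davenport's theorem to get $\ll_A N^3/\log^{A}(N)$, and finally remove the weight $n^{-2}$ by treating $n\le\sqrt N$ trivially and $n>\sqrt N$ with the mean-square bound. The differences are cosmetic: you use dyadic blocks where the paper uses partial summation, and you state the fourth-moment step as an inequality (Bessel), which is the accurate form, whereas the paper writes an equality even though $T^2$ also has frequencies in $(N,2N]$.
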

	
		The proof of Proposition \ref{Proposition "Two norm average"}, relies on a result of Davenport. To describe this, we introduce the following ($L^2$ normalized) exponential sum
	\begin{equation}\label{Equation "S(N) definition"}
		S(N,x):= \frac{1}{\sqrt{N}} \sum_{n=1}^{N} \lambda(n) e(nx).
	\end{equation}
	Then Davenport's theorem \cite{Davenport} is the following.
	\begin{theorem}[Davenport]\label{Theorem "Davenport"}
		Let $S(N,x)$ be as in \eqref{Equation "S(N) definition"}. The 
		\[
		\left|S(N,x)\right| \ll_{A} \frac{\sqrt{N}}{\log^A(N)}
		\]
		for any positive constant $A$ and uniformly for $x\in [0,1]$.
	\end{theorem}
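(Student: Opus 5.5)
The plan is to reduce the statement to the classical analogue for the Möbius function and then prove that by the circle-method dichotomy of major and minor arcs. Since the normalisation in \eqref{Equation "S(N) definition"} carries a factor $N^{-1/2}$, the claim is equivalent to
\[
\Big|\sum_{n\leqslant N}\lambda(n)e(nx)\Big| \ll_A \frac{N}{\log^A(N)}
\]
uniformly in $x$.

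The first step uses the identity $\lambda(n)=\sum_{d^2m=n}\mu(m)$, i.e.\ $\lambda=\mu*\mathbf{1}_{\square}$. It gives
\[
\sum_{n\leqslant N}\lambda(n)e(nx)=\sum_{d\leqslant \sqrt N}\ \sum_{m\leqslant N/d^2}\mu(m)e(m d^2x).
\]
Fix $D=\log^{A+1}(N)$. For $d>D$ bound the inner sum trivially; these terms contribute at most $\sum_{d>D}N/d^2\ll N/D$. For $d\leqslant D$ the inner sum has length $M=N/d^2\geqslant N/\log^{2A+2}(N)$, so $\log M\asymp\log N$. It then suffices to prove the Möbius bound $\sum_{m\leqslant M}\mu(m)e(my)\ll_{A'} M/\log^{A'}(M)$ uniformly in $y$, and apply it with $y=d^2x$ and $A'=A+2$. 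Summing over $d\leqslant D$ then loses only a constant, because $\sum_d d^{-2}$ converges.

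For the Möbius sum, use Dirichlet's theorem to write $y=a/q+\beta$ with $(a,q)=1$, $q\leqslant Q=M/\log^{B}(M)$ and $|\beta|\leqslant 1/(qQ)$, where $B$ is large in terms of $A'$.

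\emph{Major arcs} ($q\leqslant\log^{B}(M)$). Split $m$ into residue classes modulo $q$ and write $e(am/q)$ in terms of Dirichlet characters modulo $q$ via Gauss sums; the characters modulo divisors of $q$ coming from $(m,q)>1$ are handled in the same way. This reduces matters to $\sum_{m\leqslant u}\mu(m)\chi(m)$ for $u\leqslant M$. By the Siegel--Walfisz theorem for $\mu$ (zero-free region plus Siegel's theorem), these sums are $\ll_C u\exp(-c\sqrt{\log u})$ uniformly for $q\leqslant\log^{B}(M)$. Partial summation then removes the twist $e(\beta m)$ at a cost of a factor $1+|\beta|M\leqslant 1+\log^{B}(M)$, which the $\exp(-c\sqrt{\log M})$ saving absorbs.

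\emph{Minor arcs} ($\log^{B}(M)<q\leqslant Q$). Apply Vaughan's identity to $\mu$ with parameters $U=V=M^{2/5}$, giving Type I and Type II sums. The Type I sums are bounded by the standard estimate for $\sum_{\ell}\min\!\big(M/\ell,\|\ell y\|^{-1}\big)$. The Type II sums are treated by Cauchy--Schwarz and the same lattice-point estimate. The outcome is the classical bound
\[
\sum_{m\leqslant M}\mu(m)e(my)\ll \left(\frac{M}{q^{1/2}}+M^{4/5}+(Mq)^{1/2}\right)\log^4(M),
\]
which is $\ll M\log^{4-B/2}(M)$ in this range. Choosing $B=2A'+8$ finishes the proof.

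I expect the main obstacle to be uniformity. The major-arc input must be Siegel--Walfisz-type, hence ineffective but uniform in $q\leqslant\log^{B}(M)$. Its saving must beat both the $\log^{B}(M)$ loss from partial summation and the $q$-dependence coming from the Gauss sums. The Type II bilinear estimate on the minor arcs is routine but must be done carefully so that the power of $\log$ lost does not depend on $B$.
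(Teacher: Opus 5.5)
Your argument is correct, but it does something the paper never does: the paper gives no proof of this statement. It quotes it as a black box from \cite{Davenport}. Davenport's original result is for $\mu$ and was proved by Vinogradov's method, so your proposal supplies two things the paper leaves implicit.

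First, you transfer the bound from $\mu$ to $\lambda$ via $\lambda=\mu*\mathbf{1}_{\square}$, truncating at $d\leqslant\log^{A+1}(N)$. Every surviving inner sum then still has $\log M\asymp\log N$. This is exactly the step needed to justify stating Davenport's theorem for $\lambda$, and the paper passes over it silently.

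Second, you use the modern circle-method proof in place of Davenport's argument. This means Siegel--Walfisz on the arcs $q\leqslant\log^{B}(M)$, and the Vaughan-identity bound $(Mq^{-1/2}+M^{4/5}+(Mq)^{1/2})\log^{4}(M)$ on $\log^{B}(M)<q\leqslant M/\log^{B}(M)$. Your bookkeeping checks out: $|\beta|M\leqslant\log^{B}(M)$, the Gauss-sum and divisor losses are polylogarithmic, the minor-arc log loss does not depend on $B$, and $B=2A'+8$ works. Both routes give an ineffective implied constant, since both rest on Siegel's theorem.

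One point to tighten on the major arcs. Siegel--Walfisz gives $F(u):=\sum_{m\leqslant u}\mu(m)\chi(m)\ll u\exp(-c\sqrt{\log u})$ only when $q\leqslant(\log u)^{C}$. So your claim of uniformity over all $u\leqslant M$ is false for small $u$ when $q\approx\log^{B}(M)$. In the partial summation, bound $F(u)$ trivially for $u\leqslant\sqrt{M}$ and apply Siegel--Walfisz only for $\sqrt{M}<u\leqslant M$. This gives $\max_{u\leqslant M}|F(u)|\ll\sqrt{M}+M\exp(-c\sqrt{\log M})$, which is all the argument needs.
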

	
	\begin{remark}
		We shall use the letter $A$ to denote an arbitrary constant. The exact value of $A$ maybe different at different occurrences.
	\end{remark}

	First we have the following lemma.
	
	\begin{lemma}\label{Lemma "4 norm"}
		We have
		\[
		\int\limits_{0}^{1} \left|S(N,x)\right|^4 dx\ll_{A} \frac{N}{\log^A(N)}.
		\]
	\end{lemma}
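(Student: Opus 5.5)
The plan is to bound the fourth moment by pulling out a sup-norm factor supplied by Davenport's theorem and controlling what remains with Parseval. Write
\[
\int\limits_{0}^{1} |S(N,x)|^4\,dx \leqslant \Big(\sup_{x\in[0,1]} |S(N,x)|^2\Big) \int\limits_{0}^{1} |S(N,x)|^2\,dx .
\]
For the second factor, apply Lemma \ref{Lemma "Fourier Coefficient"} (2) with $h=0$ and $a_n = \lambda(n)/\sqrt{N}$ for $n\leqslant N$, $a_n=0$ otherwise. Since $|\lambda(n)|^2 = 1$, this gives
\[
\int\limits_{0}^{1} |S(N,x)|^2\,dx = \frac{1}{N}\sum_{n\leqslant N} 1 = 1 .
\]
This is exactly why $S(N,x)$ was given the $L^2$-normalization in \eqref{Equation "S(N) definition"}.

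For the first factor, Theorem \ref{Theorem "Davenport"} gives $|S(N,x)| \ll_A \sqrt{N}/\log^A(N)$ uniformly in $x$. Squaring, $\sup_x |S(N,x)|^2 \ll_A N/\log^{2A}(N)$. Renaming $2A$ as $A$, as allowed by the remark on the constant $A$, yields the claimed bound $\int_0^1 |S(N,x)|^4\,dx \ll_A N/\log^A(N)$.

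There is no serious obstacle here; the only step requiring care is the normalization. Davenport's bound is stated for the normalized sum $S(N,x)$ but with a right-hand side $\sqrt{N}/\log^A N$, which is effectively the unnormalized bound $\sum_{n\le N}\lambda(n)e(nx)\ll N/\log^A N$ divided by $\sqrt N$. I will simply take Theorem \ref{Theorem "Davenport"} as stated, so that sup times $L^2$-mass gives exactly $N/\log^A N$. This is the shape needed later: after expanding $|S|^4$ via Lemma \ref{Lemma "Fourier Coefficient"} (1), it matches $\frac{1}{N^2}\sum_n |\mathcal{G}_2(n)|^2$, as required for Proposition \ref{Proposition "Two norm average"}.
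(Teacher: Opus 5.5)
Your proposal is correct and is exactly the paper's argument: Parseval gives $\int_0^1|S(N,x)|^2\,dx=1$, and Davenport's uniform bound gives $\sup_x|S(N,x)|^2\ll_A N/\log^{2A}(N)$, so pulling out the supremum yields the lemma after renaming $A$. One small point about your closing aside: expanding $|S(N,x)|^4$ gives only the inequality $\int_0^1|S(N,x)|^4\,dx\geqslant N^{-2}\sum_{n\leqslant N+1}|\mathcal{G}_{2}(n)|^2$, not an identity, because the frequencies $n>N+1$ produce truncated convolutions; that inequality is all the later two-norm average needs.
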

	
	\begin{proof}
		By direct computation, we have 
		\begin{equation}\label{Equation "2 norm"}
			\int\limits_{0}^{1} \left|S(N,x)\right|^{2} dx = 1.
		\end{equation}
		Comparing this with Theorem \ref{Theorem "Davenport"}, we have 
		\[
		\int\limits_{0}^{1} |S(N,x)|^4dx \ll_{A} \frac{N}{\log^A(N)} \int\limits_{0}^{1} |S(N,x)|^2 dx \ll_{A} \frac{N}{\log^A(N)}.
		\]
		This completes the proof of Lemma \ref{Lemma "4 norm"}.
	\end{proof}

	Applying Lemma \ref{Lemma "Fourier Coefficient"} to $S(N,x)$ for $n\leqslant N$, we have
	\[
	\int\limits_{0}^{1} S^{2}(N,x) e(-nx) dx = \frac{\mathcal{G}_{2}(n)}{N}
	\]
	for any integer $N$. 
	Similarly,
	\begin{equation}\label{Equation "4 norm 2"}
		\int\limits_{0}^{1} \left|S(N,x)\right|^4 dx =\int\limits_{0}^{1} \left|S^2(N,x)\right|^2dx
		= \frac{1}{N^2} \sum_{n=0}^{N} \left|\mathcal{G}_{2} (n)\right|^2.
	\end{equation}
	
	\begin{proof}[Proof of Proposition \ref{Proposition "Two norm average"}]
		
		From Lemma \ref{Lemma "4 norm"} and \eqref{Equation "4 norm 2"}, we have 
		\begin{equation}\label{Equation "2 norm bound"}
			\frac{1}{N^2} \sum_{n=1}^{N} \left|\mathcal{G}_{2}(n)\right|^2 \ll_{A} \frac{N}{\log^A(N)}.
		\end{equation}
		Partial summation gives us 
		\begin{align*}
			\sum_{n=1}^{N}\frac{\left|\mathcal{G}_{2}(n)\right|^2}{n^2} &= \frac{1}{N^2} \sum_{n=1}^{N} \left|\mathcal{G}_{2}(n)\right|^2 - 1 + 2\int\limits_{1}^{N} \frac{1}{t^3} \left(\sum_{n=1}^{t} \left|\mathcal{G}_{2}(n)\right|^2 \right)dt\\
			&= \frac{1}{N^2} \sum_{n=1}^{N} \left|\mathcal{G}_{2}(n)\right|^2 - 1 + 2\left( \int\limits_{1}^{\sqrt{N}} \frac{1}{t^3} \left(\sum_{n=1}^{t} \left|\mathcal{G}_{2}(n)\right|^2 \right)dt + \int\limits_{\sqrt{N}}^{N} \frac{1}{t^3} \left(\sum_{n=1}^{t} \left|\mathcal{G}_{2}(n)\right|^2 \right)dt\right).
		\end{align*}
		Bounding trivially (using $|\mathcal{G}_{2}(n)| \leqslant n-1$), the first integral is $\mathcal{O}(\sqrt{N})$. From \eqref{Equation "2 norm bound"}, the second term is $\ll_{A} N \log^{-A}(N)$. Thus
		\[
		\sum_{n=1}^{N}\frac{\left|\mathcal{G}_{2}(n)\right|^2}{n^2}  = \frac{1}{N^2} \sum_{n=1}^{N} \left|\mathcal{G}_{2}(n)\right|^2 + \mathcal{O}_{A}\left(\frac{N}{\log^A(N)}\right).
		\]
		Combining this with \eqref{Equation "2 norm bound"} completes the proof.
	\end{proof}
	
	\section{Proofs of main theorems}\label{Section "Proofs of main theorems"}
	
	\subsection{Proof of Theorem \ref{Theorem "G_{k}, k>=3"}}
	
	We begin with the proof of Theorem \ref{Theorem "G_{k}, k>=3"}. The proof is by induction on $k$. The base step is when $k=3$, which we prove first. From Proposition \ref{Proposition "Two norm average"}, we have,
	\[
	\sum_{n=1}^{N} \left|\mathcal{G}_{2}(n)\right|^2 \ll_{A} \frac{N^3}{\log^A(N)}.
	\]
	Hence,
	\[
	\mathcal{G}_{3}(N) = \sum_{n=1}^{N-1} \lambda(n) \mathcal{G}_{2}(N-n) \ll_{A} \frac{N^2}{\log^A(N)}
	\]
	from Cauchy-Schwartz inequality.  Let us fix an integer $k_{0} \geqslant 4$. Suppose the theorem is true for all $k=k_{0}-1$. Then, on bounding trivially, we have
	\begin{equation}\label{Equation "G_k recursion"}
		\mathcal{G}_{k_0}(N) = \sum_{a_{1}=1}^{N-k_{0}+1} \lambda(a_{1}) \mathcal{G}_{k_{0}-1}(N-a_{1}) \ll_{A} \frac{N^{k_{0}-1}}{\log^A(N)}
	\end{equation}
	as claimed. This completes the proof of Theorem \ref{Theorem "G_{k}, k>=3"}.

	\subsection{Proof of Theorem \ref{Theorem "K(delta) bound"}}
	
	Now we prove Theorem \ref{Theorem "K(delta) bound"}. We start with Proposition \ref{Proposition "Two norm average"}. Since the terms on the left hand side of \eqref{Equation "Two norm average"} are non-negative, we may drop some of the terms and have
	\[
	\left|\mathcal{K}(\delta)\cap [N]\right|\cdot \delta^2 \leqslant \sum_{n\in \mathcal{K}(\delta)\cap [N]} \left|\frac{\mathcal{G}_{2}(n)}{n}\right|^2 \ll_{A} \frac{N}{\log^A(N)}
	\]
	as claimed.

	\subsection{Proof of Theorem \ref{Theorem "Sign pattern"}}
	
	Suppose $\underline{\varepsilon} = (\varepsilon_{1}, \ldots, \varepsilon_{k})\in \{\pm 1\}^{k}$ is a choice of signs. Let
	\begin{equation}
		\mathcal{T}_{\underline{\varepsilon}}(N) := \left|\left\{ (a_1,\ldots,a_{k})\in \mathcal{S}_{k}(N)\ |\  (\lambda(a_1),\ldots,\lambda(a_{k})) = (\varepsilon_1,\ldots,\varepsilon_{k})\right\}\right|.
	\end{equation}	
	We shall show that 
	\begin{equation}\label{Equation "To Show"}
		\mathcal{T}_{\underline{\varepsilon}}(N) = \frac{1}{2^{k}} \binom{N^{k-1}}{k-1} + \left(\prod_{i=1}^{k} \varepsilon_{i}\right) \mathcal{G}_{k}(N) + \mathcal{O}_{A}\left(\frac{N^{k-1}}{\log^A(N)}\right).
	\end{equation}
	Theorem \ref{Theorem "Sign pattern"} follows from \eqref{Equation "To Show"} and Theorem \ref{Theorem "G_{k}, k>=3"}.

	The proof of \eqref{Equation "To Show"} is by induction on $k$. As the base step of induction, let us consider the case when $k=2$. Let $\underline{\varepsilon} = (\varepsilon_{1}, \varepsilon_{2})\in \{\pm 1\}^{2}$ be a choice of signs. We are interested in evaluating
	\[
	\mathcal{T}_{\underline{\varepsilon}} (N) := \frac{1}{4} \sum_{n_1 + n_2 = N} (1+\varepsilon_{1} \lambda(n_1)) (1+ \varepsilon_{2} \lambda(n_2))
	= \frac{N-1}{4} + \frac{(\varepsilon_{1} + \varepsilon_{2})}{4} \sum_{n=1}^{N-1} \lambda(n) + \frac{1}{4} \varepsilon_{1} \varepsilon_{2} \mathcal{G}_{2}(N).
	\]
	Hence \eqref{Equation "To Show"} with $k=2$ follows from here since the second term on the right hand side above is $\mathcal{O}_{A}(N/\log^A(N))$ by the prime number theorem ($x=0$ case of Theorem \ref{Theorem "Davenport"}).
	
	Suppose $k > 2$ is an integer and that \eqref{Equation "To Show"} is true for all integers upto $k-1$. Let $\underline{\varepsilon} = (\varepsilon_{1}, \ldots, \varepsilon_{k})\in \{\pm 1\}^{k}$ be as above and define $\underline{\varepsilon}':= (\varepsilon_{2}, \ldots, \varepsilon_{k})$. We have 
	\begin{align*}
		\mathcal{T}_{\underline{\varepsilon}} (N) &:= \frac{1}{2} \sum_{n = 1}^{N-1} (1+\varepsilon_{1} \lambda(n))\mathcal{T}_{\underline{\varepsilon}'}(N-n)\\
		&= \frac{1}{2} \sum_{n = 1}^{N-1} (1+\varepsilon_{1} \lambda(n))\left(\frac{1}{2^{k-1}} \binom{N-n-1}{k-2} + \frac{1}{2^{k-1}}\left(\prod_{i=2}^{k} \varepsilon_{i}\right)\mathcal{G}_{k-1}(N-n) + \mathcal{O}_{A}\left(\frac{N^{k-2}}{\log^A(N)}\right)\right).
	\end{align*}
	Rearranging the above gives us
	\begin{multline*}
		\mathcal{T}_{\underline{\varepsilon}}(N) = \frac{1}{2^{k}} \sum_{n=1}^{N-1} \binom{N-n-1}{k-2} + \frac{1}{2^k}\left(\prod_{i=2}^{k} \varepsilon_{i}\right) \sum_{n = 1}^{N-1}\mathcal{G}_{k-1}(N-n) + \frac{\varepsilon_{1}}{2^{k}} \sum_{n=1}^{N-1} \lambda(n) \binom{N-n-1}{k-2} + \\
		\frac{1}{2^{k}}\left(\prod_{i=1}^{k} \varepsilon_{i}\right) \sum_{n=1}^{N-1} \lambda(n) \mathcal{G}_{k-1}(N-n) + \mathcal{O}_{A} \left(\frac{N^{k-1}}{\log^A(N)}\right).
	\end{multline*}
	The first term on the right equals the main term $\frac{1}{2^k} \binom{N-1}{k-1}$. The last term equals $	\frac{1}{2^{k}}\left(\prod_{i=1}^{k} \varepsilon_{i}\right) \mathcal{G}_{k}(N) $ from \eqref{Equation "G_k recursion"}. The second term is $\mathcal{O}_{A}\left(N^{k-1} \log^{-A}(N)\right)$. For $k > 3$, this follows from Theorem \ref{Theorem "G_{k}, k>=3"}. For $k=3$, this follows from \eqref{Equation "2 norm bound"} and the Cauchy-Schwartz inequality. The remaining term can be bounded as follows. For simplicity, by a slight abuse of notation, let us denote $\sqrt{N}S(N,0)$ as simply $S(N)$. We define $S(0)$ as $0$.
	Then we have 
	\begin{multline*}
		\sum_{n=1}^{N-1} \lambda(n) \binom{N-n-1}{k-2} = \sum_{n=1}^{N-1}  \binom{N-n-1}{k-2} \left(S(n) - S(n-1)\right) = \sum_{n=1}^{N-1} S(n) \left(\binom{N-n-1}{k-2} - \binom{N-n}{k-2}\right)\\
		= - \sum_{n=1}^{N-1} S(n) \binom{N-n-1}{k-3} + S(N-1)\binom{1}{k-2}.
	\end{multline*}
	The last term occurs only if $k=3$. It is smaller in size than the error we are claiming and hence maybe ignored. In any case, the right hand side is bounded by
	\[
	\ll \max_{n\leqslant N} \left|S(n)\right| \sum_{n=1}^{N-1} \binom{N-n-1}{k-3} \ll_{k,A} \frac{N^{k-1}}{\log^A(N)} .
	\]
	This completes the proof.
	
	\bibliographystyle{plain}
	
	\bibliography{Bibliography}

\end{document}